\newcommand{\E}{\operatorname{E}}
\newcommand{\F}{\operatorname{F}}
\newcommand{\G}{\operatorname{G}}
\newcommand{\Ha}{\operatorname{H}}
\newcommand{\Prob}{\operatorname{P}}
\newcommand{\sign}{\operatorname{sign}}
\begin{document}
\allowdisplaybreaks

\title{On consistency of the likelihood moment estimators for a linear process with regularly varying innovations}


\author{Lukas Martig       \and
        J\"urg H\"usler 
}


\institute{L. Martig \at
              Sidlerstrasse 5, 3012 Bern, Switzerland \\
              Tel.: +41-31-6318811\\
              Fax: +41-31-6313870\\
              \email{lukasmartig@gmail.com}           
           \and
           J. H\"usler \at
           Sidlerstrasse 5, 3012 Bern, Switzerland
}

\date{Received: 8 July 2015 / Accepted: date}

\maketitle

\begin{abstract}
In 1975 James Pickands III showed that the excesses over a high threshold are approximatly Generalized Pareto distributed. Since then, a variety of estimators for the parameters of this cdf have been studied, but always assuming the underlying data to be independent. In this paper we consider the special case where the underlying data arises from a linear process with regularly varying (i.e. heavy-tailed) innovations. Using this setup, we then show that the likelihood moment estimators introduced by Zhang (2007) are consistent estimators for the parameters of the Generalized Pareto distribution.
\keywords{Generalized Pareto distribution \and Linear processes \and Heavy-tailed data \and Likelihood moment estimators \and Consistency}
\subclass{60G50 \and 60G70 \and 62G32}
\end{abstract}

\section{Introduction}
\setcounter{equation}{0}
\renewcommand{\theequation}{\thesection.\arabic{equation}}
A random variable $X$ has a heavy right or left tail if there exists a positive parameter $\gamma$, called extremal index, such that as $x \to \infty$:
\begin{equation}
\Prob(X > x)\sim x^{-1/\gamma}L(x) \hspace{0.5cm}  \text{or} \hspace{0.5cm} \Prob(X < -x) \sim x^{-1/\gamma}L(x), \label{one}
\end{equation}
where $L(x)$ is a slowly varying function satisfying $L(yx) \sim L(x)$ for any $y > 0$. Here we use the common notation $g(x) \sim f(x)$ as $x \to \infty$ for $\lim_{x \to \infty} g(x)/f(x) = 1$. Since heavy-tail analysis is a special branch of extreme value theory, there are basically two main approaches for modeling extreme events:
\newline The first approach -- the more traditional one introduced by Fisher \& Tippet (1928, \cite{Fisher}) -- is based on the Generalized Extreme Value distribution (GEVD) $ \G _\gamma(x) := \exp\left( - (1+\gamma x)^{-1/\gamma} \right)$, $1 + \gamma x > 0$, and is used to model block extremes, i.e. maxima (or minima) observed within a pre-defined period. Actually, as $\gamma$ is assumed to be positive in (\ref{one}), the GEVD simply reduces to the Fr\'echet distribution in this special setup.
\newline The second and more recent approach -- the one being discussed in this paper -- is to model exceedances over high thresholds. This kind of modeling was introduced by Pickands (1975, \cite{Pickands}) and is based on the Generalized Pareto distribution (GPD) $\Ha_{\gamma, \sigma^*}(x) := 1- \left(1+ \gamma x / \sigma^* \right)^{-1/\gamma}$, where $1+ \gamma x / \sigma^*  > 0$ and $\sigma^*  = \sigma^*(t)$ is a suitable scale function. Notice that as before, the GPD reduces to the Pareto distribution in case $\gamma > 0$.
\vspace{0.25cm}
\newline Classical studies on how to fit a GPD (see section 2 for references) usually cover the simplest case when the underlying data are i.i.d. Though the assumption of independence has its practical advantages, most collected data sets have non-negligible dependence structures which need to be included in statistical modeling. An easy but quite powerful way to model such a dependence is the use of so-called autoregressive moving average (ARMA) time series models. In detail, a stationary time series $\{X_n \}_{n \in \mathbb{Z}_0}$ is called an ARMA($p$,$q$)-process if it has additive innovations $\{Z_n \}_{n \in \mathbb{Z}_0}$ satisfying the following recurrence formula:
\vspace{0.2cm}
\begin{equation}
X_n - \phi_1 X_{n-1} - \ldots - \phi_p X_{n-p} = Z_n + \theta_1 Z_{n-1} + \ldots + \theta_q Z_{n-q}, \label{ARMAintro}
\end{equation}
\vspace{-0.3cm}
\newline with $\phi_1, \ldots, \phi_p; \theta_1, \ldots, \theta_q \in \mathbb{R}$ and $p,q \in \mathbb{N}_0$. From \cite{Brockwell}, Theorem 3.1.1, it then follows that if $\phi(z)$ has no root on the unit circle and $\phi(z)$ and $\theta(z)$ have no common zeroes, the ARMA process in (\ref{ARMAintro}) has the causal representation
\begin{equation}
X_n = \sum_{j=0}^{\infty} c_j Z_{n-j}, \label{causalintro}
\end{equation}
where the coefficients $\{c_j\}_{j \geq 0}$ are determined by the relation
\begin{equation*}
c(z) = \sum_{j=0}^{\infty}c_j z^j = \theta(z)/\phi(z), \hspace{0.5cm} |z| \leq 1.
\end{equation*}
\vspace{-0.3cm}
\newline
\cite{Datta} studied the asymptotic behavior of linear processes (also known as MA($\infty$)-processes) given in (\ref{causalintro}) for the case where the innovation's marginal cdf has regularly varying tails in the sense of (\ref{one}). Resnick \& St\^aric\^a proved that Hill's estimator is a consistent estimator for $\gamma$ (1995, \cite{Resnick2}) and additionally showed its asymptotic normality (1997, \cite{Resnick4}). In this paper, the aforementioned results shall be extended to the case where one wants to fit a GPD instead of simply estimating the extremal index. In section 2 we are going to fit a special pair of estimators introduced by Zhang (2007, \cite{Zhang}): the likelihood moment estimators (LMEs) and analogously to \cite{Resnick2}, we show in section 3 that the LMEs are consistent estimators for the parameters of the Generalized Pareto distribution. In section 4 we finally discuss further steps needed to show asymptotic normality and why the classical results for tail empirical processes by Resnick \& St\^aric\^a are more promising than the widely used and powerful tools established by Drees (\cite{Drees}) when studying asymptotic behavior of linear processes.
\section{Fitting a Generalized Pareto distribution for a linear process with regularly varying tails}
\setcounter{equation}{0}
Let $\gamma > 0$ and $\{c_j\}_{j \geq 0} \in \mathbb{R}^{\infty}$ and consider a linear process
\begin{equation}
X_n = \sum_{j=0}^{\infty} c_j Z_{n-j} \tag{\text{A.}1} \label{A1}
\end{equation}
whose iid innovations $Z_n$ have a marginal cumulative distribution function (cdf) $G_{Z}$ having regularly varying tails with index $- 1 / \gamma$, i.e.
\begin{equation}
1 - G_Z(z) \sim \pi_1 z^{-1/\gamma}L(z) \hspace{0.4cm} \text{and} \hspace{0.4cm} G_Z(-z)  \sim \pi_2 z^{-1/\gamma}L(z) \hspace{0.4cm} \text{as} \hspace{0.2cm} z \to \infty, \tag{\text{A.}2} \label{A2}
\end{equation}
for $\pi_1, \pi_2 \geq 0$, $\pi_1+ \pi_2 = 1$ and a slowly varying function $L(z)$ satisfying $L(yz) \sim L(z)$ as $z \to \infty$ for any $y > 0$. Clearly, (A.2) implies $1 - G_{|Z|}(z) \sim z^{-1/\gamma}L(z)$, where $G_{|Z|}$ is the (marginal) distribution function of $|Z_n|$.
\vspace{0.25cm}
\newline In order to make some statements about the marginal distribution of $X_n$, we need some restrictions on the sequence of coefficients $\{c_j\}_{j \geq 0}$ in (A.1). We assume that there is at least one $c_j \neq 0$ and that for some $0 < \delta <1/\gamma \wedge 1$:
\begin{equation}
\sum_{j=0}^{\infty} |c_j|^{\delta} < \infty.  \tag{A.3} \label{A3}
\end{equation}
Notice that (A.3) always holds for causal ARMA processes as in (\ref{ARMAintro}) (\cite{Datta}, p. 341). For simplicity and due to their frequent use, (\ref{A1})-(\ref{A3}) will be called the \textbf{Basic Assumptions}.
\vspace{0.25cm}
\newline If we denote the marginal distribution function of $|X_n|$ by $\F_{|X|}$, then, with the help of Lemma 5.2 of \cite{Datta}, we directly conclude that the Basic Assumptions imply
\begin{equation*}
\lim_{t \to \infty} \frac{1-\F_{|X|}(t)}{1 - \G_{|Z|}(t)}= \sum_{k=0}^{\infty} |c_k|^{1/\gamma} := ||c||.
\end{equation*}
Under mild restrictions on the coefficients $c_j$ the tail behavior of $\F_{|X|}$ thus coincides with that of $\G_{|Z|}$ up to the constant $|| c ||$ and consequently, the marginal distribution of the time series $|X_n|$ has also a regularly varying tail with index $- 1 / \gamma$. This fact has an important consequence: By \cite{deHaan}, Theorem 1.1.6, it directly follows that $\F_{|X|}$ belongs to the domain of attraction of the Generalized Extreme Value distribution $\G _\gamma(x) = \exp\left( - (1+\gamma x)^{-1/\gamma} \right)$ and this simply means that there exists a suitable positive function $a_{|X|}(t)$ such that for $t \to \infty$:
\begin{equation}
\frac{b_{|X|}(tx)-b_{|X|}(t)}{a_{|X|}(t)} \to \frac{x^{\gamma} - 1 }{\gamma}, \label{domain3}
\end{equation}
where $b_{|X|}(t) := (1/(1-\F_{|X|}))^{\leftarrow}(t) := \inf\{y : 1/(1-\F_{|X|}(y)) \geq t  \}$ is the $1 - 1/t$-quantile of $\F_{|X|}$. On the other hand, following \cite{Pickands}, Theorem 7, there also exists a positive function $\sigma^*(t)$ such that for $\F_{|X|,t}(x):= \Prob\big{(}|X|-t \leq x \, \big{|} |X| > t\big{)}$:
\vspace{0.2cm}
\begin{equation}
\lim_{t \to \infty} \: \sup_{x > 0} \: \left|\F_{|X|,t}(x) - \left[1- \left(1+ \frac{\gamma}{\sigma^* (t)}x \right)^{-1/\gamma} \right] \right| = 0. \label{domain4}
\end{equation}
\vspace{0.1cm}
\newline
As anticipated in section 1, the function $\Ha_{\gamma, \sigma^*}(x) = 1- \left(1+ \gamma x / \sigma^* (t) \right)^{-1/\gamma}$ is known as the Generalized Pareto distribution with scale function $\sigma^* (t)$. Relation (\ref{domain4}) thus tells us, that the random variable $|X|-t \, \big{|}|X|>t$, the excess above a high threshold $t$, has approximately a Generalized Pareto distribution. In practice, $t$ is replaced by the $(k + 1)$th largest observation $|X|_{n,n-k}$ for a sufficiently large $k << n$. Consequently, the target will be to estimate $\gamma$ and $\sigma^* (t)$ using the $k$ excess-values $|X|_{n,n} - |X|_{n,n-k}, \ldots, |X|_{n,n-k+1} - |X|_{n,n-k}$.
\vspace{0.25cm}
\newline Up to the present day a variety of estimators have been proposed for the two parameters $\gamma$ and $\sigma^*  = \sigma^* (t)$ such as the maximum likelihood estimators (\cite{Smith}), the moment and probability weighted moment estimators (\cite{Hosking}), the likelihood moment estimators (\cite{Zhang}) or the goodness-of-fit estimators (\cite{Huesler}). A nice summary is also available in \cite{deHaan}.
\vspace{0.25cm}
\newline In this paper only the likelihood moment estimators (LMEs) will be studied. The reason behind this choice is that the LMEs can be considered as a generalization of two aforementioned estimators: If we choose $r = -\gamma$, we'll get the usual maximum likelihood equations. By choosing $r = -1$, we just get the goodness-of-fit equations of \cite{Huesler}. As the name suggests, the calculation of the LMEs is based on a mixture between the maximum likelihood and the moment estimation method. In detail, we seek to solve the following system of equations for $\gamma$ and $\sigma^*$:
\vspace{0.2cm}
\begin{eqnarray}
  \frac{1}{k} \sum_{i=0}^{k-1} \log \left(1 + \frac{\gamma}{\sigma^*} \left(|X|_{n,n-i} - |X|_{n,n-k} \right)  \right)  & = & \gamma \label{LME1} \\[0.1cm]
 \frac{1}{k} \sum_{i=0}^{k-1} \left(1 + \frac{\gamma}{\sigma^*} \left(|X|_{n,n-i} - |X|_{n,n-k} \right)  \right)^{r/\gamma} & = & (1-r)^{-1}, \label{LME2}
\end{eqnarray}
\vspace{-0.1cm} \newline
where $r < 1$.
\newline Defining $\gamma / \sigma^*:= \theta^*$ and inserting the first equation into the second one, this task reduces to the one-dimensional problem of solving the following equation for $\theta^*$:
\begin{equation}
\frac{1}{k} \sum_{i=0}^{k-1} \exp\left(\frac{ r \log(1+\theta^* \left(|X|_{n,n-i} - |X|_{n,n-k} \right) )}{  \frac{1}{k} \sum_{i=0}^{k-1} \log \left(1 + \theta^* \left(|X|_{n,n-i} - |X|_{n,n-k} \right)  \right) } \right) - \frac{1 }{(1-r)} = 0. \label{LME3}
\end{equation}
According to \cite{Zhang}, Theorem 2.1., equation (\ref{LME3}) has exactly one solution $\hat{\theta}_{LME}$ for $r < 1/2$ and $ r \neq 0$ which can be easily computed using a Newton-Rhapson algorithm. (Notice that a similar procedure is performed when calculating the maximum likelihood estimators, but the obtained target function may have several roots which are difficult to identify; see \cite{Grimshaw}). $\hat{\gamma}_{LME}$ then is obtained by inserting $\hat{\theta}_{LME}$ in (\ref{LME1}) and from there, it's a straightforward step to derive $\hat{\sigma}_{LME}$.
\section{Consistency}
\setcounter{equation}{0}
In this section, we show that the likelihood moment estimators (the solutions of the system of equations (\ref{LME1}) and (\ref{LME2})) are consistent estimators in case that our Basic Assumptions hold and provided that $r < 0$.
\vspace{0.25cm}
\newline We begin with some preliminaries: Using standard arguments and \cite{deHaan}, Theorem 1.1.6, one can show that the connection between the two ``auxilary'' functions $a_{|X|}$ and $\sigma^*$ from (\ref{domain3}) and (\ref{domain4}) is as follows (cf. \cite{Huesler2}):
\begin{equation}
\sigma(t) := \sigma^*(b_{|X|}(t)) = a_{|X|}( b^{\leftarrow}_{|X|}(b_{|X|}(t))) \sim a_{|X|}(t). \label{aissig}
\end{equation}
Combining (\ref{domain3}) and (\ref{aissig}), we then simply get
\begin{equation*}
\frac{b_{|X|}(tx)-b_{|X|}(t)}{\sigma(t)} \to \frac{x^{\gamma} - 1 }{\gamma},
\end{equation*}
which finally yields (\cite{deHaan}, Lemma 1.2.9)
\begin{equation}
\frac{b_{|X|}(t)}{\sigma(t)} \to \frac{1}{\gamma}, \label{relation}
\end{equation}
both convergences as $t \to \infty$. As we will see, the (new) scaling function $\sigma(t)$ plays a key role throughout this section.
\vspace{0.25cm}
\begin{lemma} Let the Basic Assumptions hold. Also, let's denote the space of right-continuous functions on $[0,\infty)$ by $\operatorname{D}[0,\infty)$. Then for any $z > 0$ we have as $n,k,n/k \to \infty$:
\vspace{0.1cm}
\begin{enumerate}
\item[(a)] for $\mu_X(z,\infty] := \frac{1}{k} \sum_{i=1}^n \mathbf{1} \left\{ |X_i| / b_{|X|}(n/k) \in (z,\infty] \right\}$ and $\mu(z,\infty] := z^{-1/\gamma}$:
\begin{equation}
 \mu_X(z,\infty] \:\: \overset{P}{\to} \: \: \mu(z,\infty], \label{measure}
\end{equation}
\item[(b)] for the $(\lceil ks \rceil + 1)$th largest absolute observation $|X|_{n,n-\lceil ks \rceil}$:
\begin{equation}
 \frac{|X|_{n,n-\lceil ks \rceil}}{b_{|X|}(n/k)} \:\: \overset{P}{\to} \: \: s^{-\gamma} \hspace{0.25cm} \text{in} \: \operatorname{D}[0,\infty), \label{ratio}
\end{equation}
where $\lceil ks \rceil$ is the smallest integer greater than or equal to $ks$ with $s > 0$.
\end{enumerate}
\end{lemma}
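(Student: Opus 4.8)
The plan is to read (a) as a weak law of large numbers for the tail empirical measure of $\{|X_n|\}$ and then to derive (b) from (a) by inverting the monotone relation between exceedance counts and order statistics. For (a) I would first settle the mean. Writing $u_n:=z\,b_{|X|}(n/k)$, one has $\E[\mu_X(z,\infty]]=\tfrac{n}{k}\,\Prob(|X_1|>u_n)$. The Basic Assumptions together with Lemma 5.2 of \cite{Datta} give $1-\F_{|X|}(t)\sim\|c\|\,(1-\G_{|Z|}(t))$, so $\F_{|X|}$ has a regularly varying tail of index $-1/\gamma$; since $b_{|X|}(n/k)$ is the $(1-k/n)$-quantile of $\F_{|X|}$, regular variation yields $\tfrac{n}{k}\,\Prob(|X_1|>u_n)\to z^{-1/\gamma}$, i.e.\ $\E[\mu_X(z,\infty]]\to\mu(z,\infty]$. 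It then remains to prove $\Var(\mu_X(z,\infty])\to 0$, after which (a) follows by Chebyshev's inequality.

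Expanding $\Var(\mu_X(z,\infty])=\tfrac1{k^2}\sum_{i,j=1}^{n}\Cov(\mathbf 1_i,\mathbf 1_j)$ with $\mathbf 1_i:=\mathbf 1\{|X_i|>u_n\}$, the diagonal is at most $\tfrac1{k}\cdot\tfrac{n}{k}\Prob(|X_1|>u_n)\to 0$. The off-diagonal sum is the crux and is where the dependence of the linear process must be tamed. I would approximate the tail behaviour of $\{|X_n|\}$ by that of the $m$-dependent truncation $X_n^{(m)}:=\sum_{j=0}^{m}c_jZ_{n-j}$, showing via (A.3) and regular variation that the remainder $\sum_{j>m}c_jZ_{n-j}$ contributes a tail of order $\sum_{j>m}|c_j|^{1/\gamma}\to 0$ uniformly in $n$. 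For the truncation only $O(mn)$ pairs have $|i-j|\le m$ and hence nonzero covariance, and a joint-exceedance estimate bounds each such covariance by $\Prob(|X_1|>u_n)=O(k/n)$; the off-diagonal contribution is therefore $O(m/k)\to 0$ for fixed $m$ as $k\to\infty$, after which one lets $m\to\infty$. The main obstacle is exactly this joint-tail control: establishing that two simultaneous exceedances are no more likely than a single one, so that a lone large innovation crosses the threshold in at most one coordinate except for close indices, which is where the heavy-tail structure and the summability (A.3) genuinely enter. Alternatively, one may invoke the point-process / tail-empirical-measure convergence of \cite{Datta} and \cite{Resnick2} directly.

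For (b) the key is the duality between counts and order statistics: for fixed $z,s>0$ the event $\{|X|_{n,n-\lceil ks\rceil}>z\,b_{|X|}(n/k)\}$ coincides, up to the usual care with ties and strict inequalities, with $\{\mu_X(z,\infty]\ge(\lceil ks\rceil+1)/k\}$. Since by (a) $\mu_X(z,\infty]\overset{P}{\to}z^{-1/\gamma}$ and the limit $z\mapsto z^{-1/\gamma}$ is continuous and strictly decreasing, an inverse-function (Vervaat-type) argument, cf.\ \cite{deHaan}, Lemma 1.2.9, gives for each fixed $s$ the pointwise convergence $|X|_{n,n-\lceil ks\rceil}/b_{|X|}(n/k)\overset{P}{\to}s^{-\gamma}$, the solution of $z^{-1/\gamma}=s$. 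Finally, because both $s\mapsto|X|_{n,n-\lceil ks\rceil}/b_{|X|}(n/k)$ and the limit $s\mapsto s^{-\gamma}$ are monotone in $s$ and the limit is continuous, pointwise convergence in probability upgrades to locally uniform convergence in probability by a P\'olya/Dini-type argument for monotone functions, which is precisely the asserted convergence in $\D[0,\infty)$. This functional step is soft once (a) is available, so I expect essentially all of the work to reside in the variance estimate of part (a).
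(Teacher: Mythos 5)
Your proposal is correct in outline, but for part (a) it takes a genuinely different route from the paper. The paper never computes a variance: it writes $X_t=\sum_j c_j\sign(Z_{t-j})|Z_{t-j}|$, conditions on the sign pattern $\{\sign(Z_{t-j})\}_{j=0}^m$ so that the truncated process becomes an MA($m$) with \emph{deterministic} coefficients and positive iid innovations $|Z_{t-j}|$, invokes Proposition 3.2 of \cite{Resnick2} for each of the $2^{m+1}$ sign patterns (separately for $\tilde X^{(m)}$ and $-\tilde X^{(m)}$, rescaling the quantiles $b_{\tilde X}$, $b_{-\tilde X}$ to $b_{|X|}$ via Lemma 5.2 of \cite{Datta}), and then removes the conditioning by total probability, since the limit $\bigl(\sum_{j\le m}|c_j|^{1/\gamma}/\sum_{j}|c_j|^{1/\gamma}\bigr)z^{-1/\gamma}$ does not depend on the signs. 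Your direct second-moment argument reaches the same intermediate statement for $\mu_X^{(m)}$ more cheaply: the truncation is $m$-dependent, so only the $O(mn)$ pairs with $|i-j|\le m$ contribute, and for those the trivial bound $\Cov(\mathbf 1_i,\mathbf 1_j)\le\Prob(|X_1^{(m)}|>u_n)=O(k/n)$ already gives $\Var(\mu_X^{(m)})=O(m/k)\to 0$ — note that the ``joint-tail control'' you flag as the main obstacle is therefore a non-issue; no genuine estimate that joint exceedances are rarer than single ones is needed at this stage. What your sketch compresses, and where the paper's real work for the final step sits, is the converging-together argument $\lim_{m}\limsup_{n,k,n/k}\Prob(|\mu_X^{(m)}-\mu_X|>\varepsilon)=0$: smallness of the remainder $\sum_{j>m}c_jZ_{1-j}$ does not by itself make the indicator sums close, and the paper handles this by the slicing events $A_i,B_i,C_i$ with thresholds $z\pm\delta$, continuity of $z\mapsto z^{-1/\gamma}$, and the tail bound $\frac{n}{k}\Prob(|\sum_{j>m}c_jZ_{1-j}|>\delta\,b_{|X|}(n/k))\lesssim\sum_{j>m}|c_j/\delta'|^{1/\gamma}$ via \cite{Datta} and Potter's bounds — ingredients you name but should spell out. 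Your treatment of (b) (count/order-statistic duality plus inversion and monotonicity) is exactly the standard argument the paper cites from \cite{Resnick3}, pp.~81--83. Net assessment: your route is more elementary and self-contained (it avoids conditioning on signs and the appeal to \cite{Resnick2}, whose Proposition 3.2 requires the positivity structure the paper manufactures), while the paper's route buys the truncated-process convergence off the shelf; both rely identically on the Datta--McCormick tail equivalence and on the same $\lim_m\limsup$ truncation principle.
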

\begin{proof} The proof of (b) is a direct consequence of (a) and can simply be looked up in \cite{Resnick3}, p. 81-83. To prove (\ref{measure}), let's rewrite our sequence $X_t$ as
\begin{equation*}
X_t = \sum_{j=0}^{\infty} c_j Z_{t-j} = \sum_{j=0}^{\infty} c_j \sign(Z_{t-j}) |Z_{t-j}| := \sum_{j=0}^{\infty} c'_j |Z_{t-j}|.
\end{equation*}
Furthermore, let $m \in \mathbb{N}\,  \diagdown \{0\}$ and define by $X_t^{(m)} := \sum_{j=0}^{m} c'_j |Z_{t-j}|$ the associated (and truncated) $m$th order moving-average MA($m$). Notice that the $c'_j$s are random now, but as we will see below, one can get around this problem by conditioning on the first $m+1$ innovations $\{\sign(Z_{t-j})\}^m_{j=0}$.
\newline The proof of (\ref{measure}) is now set up in two parts: First we are going to show that for $\mu_X^{(m)}(z,\infty] := \frac{1}{k} \sum_{i=1}^n \mathbf{1} \left\{|X_i^{(m)}|/ b_{|X|}(n/k)\in (z,\infty] \right\}$, the random measure of the truncated time series, we have
\begin{equation}
\mu_X^{(m)}(z,\infty] \overset{P}{\to} \frac{\sum_{j=0}^m |c_j|^{1/\gamma}}{\sum_{j=0}^{\infty} |c_j|^{1/\gamma}} \, \mu(z,\infty] \label{truncatedconv}
\end{equation}
as $n,k,n/k \to \infty$. By showing (\ref{truncatedconv}), the proof of (\ref{measure}) is straightforward then.
\vspace{0.1cm}
\newline Denote the set of all possible (and disjoint) random sequences $\{\sign(Z_{t-j})\}_{j\geq 0}$ by $\mathfrak{S}_\infty$. A single element of this set is further denoted by $S_l^{(\infty)}, l \geq 1$. Define
\begin{equation*}
\tilde{X}_{t,l}^{(m)} := X_t^{(m)}|S_l^{(\infty)} = \sum_{j=0}^{m} c''_{j,l} |Z_{t-j}|,
\end{equation*}
a MA($m$) process with positive iid innovations $|Z_{t-j}|$ consisting of deterministic constants $c''_{j,l}$ that equal either to $c_j$ or $-c_j$, according to $S_l^{(\infty)}$. Finally let
\begin{equation*}
\tilde{X} = \tilde{X}_{t,l} := \lim_{m \to \infty} \tilde{X}_{t,l}^{(m)} =  \sum_{j=0}^{\infty} c''_{j,l} |Z_{t-j}|.
\end{equation*}
If we denote the distribution function of $\tilde{X}$ and $-\tilde{X}$ by $\F_{\tilde{X}}$ and $\F_{-\tilde{X}}$, respectively, we
deduce from \cite{Resnick2}, Proposition 3.2, that for any $m$ and a fixed $l$ we have as $n,k,n/k \to \infty$:
\begin{eqnarray}
\frac{1}{k} \sum_{i=1}^n \mathbf{1} \left\{ \frac{\tilde{X}_{i,l}^{(m)}}{ b_{\tilde{X}}(n/k)} \in (z,\infty] \right\} &\overset{P}{\to}& \frac{ \sum_{j=0}^m  \left([c''_{j,l}]^{1/\gamma} \right)^+} {\sum_{j=0}^{\infty} \left([c''_{j,l}]^{1/\gamma} \right)^+} \, \mu(z,\infty], \label{convtilde1} \\[0.1cm]
\frac{1}{k} \sum_{i=1}^n \mathbf{1} \left\{ \frac{-\tilde{X}_{i,l}^{(m)}}{ b_{-\tilde{X}}(n/k)} \in (z,\infty] \right\} &\overset{P}{\to}& \frac{ \sum_{j=0}^m  \left([c''_{j,l}]^{1/\gamma} \right)^-} {\sum_{j=0}^{\infty} \left([c''_{j,l}]^{1/\gamma} \right)^-} \, \mu(z,\infty], \label{convtilde2}
\end{eqnarray}
where $b_{\tilde{X}}(t)$ and $b_{-\tilde{X}}(t)$ denote the $1-1/t$-quantiles of $\F_{\tilde{X}}$ and $\F_{-\tilde{X}}$, respectively, and $x^+ := \max(x,0)$, $x^- := -\min(x,0)$.
\newline The goal is now to rescale the empirical processes in (\ref{convtilde1}) and (\ref{convtilde2}) in order to obtain new processes that depend on $b_{|X|}(n/k)$. Using Lemma 5.2 of \cite{Datta}, it follows that for $n,k,n/k \to \infty$:
\begin{eqnarray}
\frac{1-\F_{\tilde{X}}(n/k)}{1-\G_{|Z|}(n/k)} &\to& \sum_{j=0}^{\infty} \left([c''_{j,l}]^{1/\gamma} \right)^+, \label{b-ratio1}\\[0.1cm]
\frac{1-\F_{-\tilde{X}}(n/k)}{1-\G_{|Z|}(n/k)} &\to& \sum_{j=0}^{\infty} \left([c''_{j,l}]^{1/\gamma} \right)^-,
\label{b-ratio2}\\[0.1cm]
\frac{1-\F_{|X|}(n/k)}{1-\G_{|Z|}(n/k)} &\to& \sum_{j=0}^{\infty} |c_j|^{1/\gamma}.
\label{b-ratio3}
\end{eqnarray}
Repeating twice the final steps of the proof of Proposition 3.2 of \cite{Resnick2}, (\ref{b-ratio1})-(\ref{b-ratio3}) imply
\begin{equation*}
\frac{b_{|X|}(n/k)}{ b_{\tilde{X}}(n/k)} \to \left( \frac{\sum_{j=0}^{\infty} \left([c''_{j,l}]^{1/\gamma} \right)^+}{\sum_{j=0}^{\infty} |c_j|^{1/\gamma}} \right)^{-\gamma}, \: \:
\frac{b_{|X|}(n/k)}{ b_{-\tilde{X}}(n/k)} \to \left( \frac{\sum_{j=0}^{\infty} \left([c''_{j,l}]^{1/\gamma} \right)^-}{\sum_{j=0}^{\infty} |c_j|^{1/\gamma}} \right)^{-\gamma},
\end{equation*}
both convergences as $n,k,n/k \to \infty$, and from (\ref{convtilde1}) and (\ref{convtilde2}), we finally get
\begin{eqnarray*}
\frac{1}{k} \sum_{i=1}^n \mathbf{1} \left\{ \frac{\tilde{X}_{i,l}^{(m)}}{ b_{|X|}(n/k)} \in (z,\infty] \right\} &\overset{P}{\to}& \frac{ \sum_{j=0}^m  \left([c''_{j,l}]^{1/\gamma} \right)^+} {\sum_{j=0}^{\infty} |c_j|^{1/\gamma}} \, \mu(z,\infty], \\[0.1cm]
\frac{1}{k} \sum_{i=1}^n \mathbf{1} \left\{ \frac{-\tilde{X}_{i,l}^{(m)}}{ b_{|X|}(n/k)} \in (z,\infty] \right\} &\overset{P}{\to}& \frac{ \sum_{j=0}^m  \left([c''_{j,l}]^{1/\gamma} \right)^-} {\sum_{j=0}^{\infty} |c_j|^{1/\gamma}} \, \mu(z,\infty].
\end{eqnarray*}
Hence we conclude that for any $m$ and for arbitrary $l$ as $n,k,n/k \to \infty$:
\begin{eqnarray}
\mu_{\tilde{X},l}^{(m)} (z,\infty] &:=& \frac{1}{k} \sum_{i=1}^n \mathbf{1} \left\{ \frac{|\tilde{X}_{i,l}^{(m)}|}{ b_{|X|}(n/k)} \in (z,\infty] \right\} \nonumber \\[0.1cm] & \overset{P}{\to}& \frac{\sum_{j=0}^{m} |c_j|^{1/\gamma}} {\sum_{j=0}^{\infty} |c_j|^{1/\gamma}} \, \mu(z,\infty]. \label{condmeasureconv}
\end{eqnarray}
The convergence in (\ref{condmeasureconv}) thus shows that the limit of $\mu_{\tilde{X},l}^{(m)} (z,\infty]$ as $n,k,n/k \to \infty$ does not depend on $l$ any more because we only need to consider the absolute value of the $|c_j|$s.
\newline Next consider the set $\mathfrak{S}_m$ of random sequences $\{\sign(Z_{t-j})\}^m_{j = 0}$ with $m \in \mathbb{N}\,  \diagdown \{0\}$. $\mathfrak{S}_m$ consists of $2^{m+1}$ elements and we will denote a single element by $S_{l^*}^{(m)}$, $1 \leq l^* \leq 2^{m+1}$. Clearly, for every $S_{l^*}^{(m)}$ there exists at least one element in $\mathfrak{S}_\infty$, say $S_{l_0}^{(\infty)}$, such that $X_t^{(m)}|S_{l^*}^{(m)} =  X_t^{(m)}|S_{l_0}^{(\infty)} = \tilde{X}_{t,l_0}^{(m)} $ and it follows from (\ref{condmeasureconv}) that for any $l^*$ and any $\varepsilon > 0$:
\begin{eqnarray*}
&& \Prob \left( \left. \left| \mu_{X}^{(m)}(z,\infty] - \frac{\sum_{j=0}^m |c_j|^{1/\gamma}}{\sum_{j=0}^{\infty} |c_j|^{1/\gamma}} \, \mu(z,\infty]\right|  > \varepsilon \hspace{0.25cm} \right|  \: S_{l^*}^{(m)}  \right) \\[0.1cm]
&=& \Prob \left( \left|  \frac{1}{k} \sum_{i=1}^n \mathbf{1} \left\{ \frac{|X_i^{(m)} \, | \, S_{l^*}^{(m)}| }{ b_{|X|}(n/k)} \in (z,\infty] \right\} - \frac{\sum_{j=0}^m |c_j|^{1/\gamma}}{\sum_{j=0}^{\infty} |c_j|^{1/\gamma}} \, \mu(z,\infty]\right|  > \varepsilon \right) \\[0.1cm]
&=& \Prob \left( \left|  \frac{1}{k} \sum_{i=1}^n \mathbf{1} \left\{ \frac{|X_i^{(m)} \, | \, S_{l_0}^{(\infty)}| }{ b_{|X|}(n/k)} \in (z,\infty] \right\} - \frac{\sum_{j=0}^m |c_j|^{1/\gamma}}{\sum_{j=0}^{\infty} |c_j|^{1/\gamma}} \, \mu(z,\infty]\right|  > \varepsilon \right) \\[0.1cm]
&=&  \Prob \left( \left| \mu_{\tilde{X},l_0}^{(m)}(z,\infty] - \frac{\sum_{j=0}^m |c_j|^{1/\gamma}}{\sum_{j=0}^{\infty} |c_j|^{1/\gamma}} \, \mu(z,\infty]\right|  > \varepsilon \right) \to 0,
\end{eqnarray*}
$n,k,n/k \to \infty$. By an application of the formula of total probability it then follows that for any $m \in \mathbb{N}\,  \diagdown \{0\}$:
\begin{eqnarray*}
&&\Prob \left(\left| \mu_X^{(m)}(z,\infty] - \frac{\sum_{j=0}^m |c_j|^{1/\gamma}}{\sum_{j=0}^{\infty} |c_j|^{1/\gamma}} \, \mu(z,\infty]\right| > \varepsilon \right) \\[0.1cm]
&=& \sum_{l^*=1}^{2^{m+1}} \Prob \left( \left. \left| \mu_X^{(m)}(z,\infty] - \frac{\sum_{j=0}^m |c_j|^{1/\gamma}}{\sum_{j=0}^{\infty} |c_j|^{1/\gamma}} \, \mu(z,\infty]\right|  > \varepsilon \hspace{0.25cm} \right| \: S_{l^*}^{(m)} \right) \Prob\left(S_{l^*}^{(m)}\right) \\[0.1cm]
&\leq&  \sup_{l^*}  \Prob \left( \left. \left| \mu_X^{(m)}(z,\infty] - \frac{\sum_{j=0}^m |c_j|^{1/\gamma}}{\sum_{j=0}^{\infty} |c_j|^{1/\gamma}} \, \mu(z,\infty]\right|  > \varepsilon \hspace{0.25cm} \right| \: S_{l^*}^{(m)} \right)  \sum_{l^*=1}^{2^{m+1}} \Prob\left(S_{l^*}^{(m)}\right) \\[0.25cm] &\to& 0,
\end{eqnarray*}
$n,k,n/k \to \infty$. This proves (\ref{truncatedconv}) and finishes the first part of the proof.
\newline To show that the convergence in (\ref{measure}) holds, it now suffices to check whether
\begin{equation*}
\lim_{m \to \infty} \: \limsup_{n,k,n/k \to \infty} \Prob\left( \left| \mu_X^{(m)}(z,\infty] - \mu_X(z,\infty] \right| > \varepsilon \right) = 0.
\end{equation*}
(\cite{Resnick3}, Theorem 3.5). Based on the proof of \cite{Resnick2}, Proposition 3.3, define the events
\begin{eqnarray*}
  A_i &:=& \left\{ 0 \leq \frac{|X_i^{(m)}|}{ b_{|X|}(n/k)} - \frac{|X_i|}{ b_{|X|}(n/k)} \leq \delta \right\}, \\[0.1cm]
  B_i &:=& \left\{ 0 \leq  \frac{|X_i|}{ b_{|X|}(n/k)} - \frac{|X_i^{(m)}|}{ b_{|X|}(n/k)} \leq \delta\right\}, \\[0.1cm]
  C_i &:=& \left\{ \left|\frac{|X_i^{(m)}|}{ b_{|X|}(n/k)} - \frac{|X_i|}{ b_{|X|}(n/k)} \right| > \delta \right\}
\end{eqnarray*}
for some $0 < \delta < z$. Then for $m \in \mathbb{N}\, \diagdown \{0\}$ and $\varepsilon > 0$:
\begin{eqnarray*}
&& \Prob\left( \left| \mu_X^{(m)}(z,\infty] - \mu_X(z,\infty] \right| > \varepsilon \right) \\[0.25cm]
&\leq& \Prob\left( \frac{1}{k} \sum_{i=1}^n \left| \mathbf{1} \left\{ \frac{|X_i^{(m)}|}{ b_{|X|}(n/k)} \in (z,\infty] \right\} - \mathbf{1} \left\{ \frac{|X_i|}{ b_{|X|}(n/k)} \in (z,\infty] \right\} \right| \mathbf{1}_{A_i} > \varepsilon/3 \right) \\[0.1cm]
&& + \Prob\left( \frac{1}{k} \sum_{i=1}^n \left| \mathbf{1} \left\{ \frac{|X_i^{(m)}|}{ b_{|X|}(n/k)} \in (z,\infty] \right\} - \mathbf{1} \left\{ \frac{|X_i|}{ b_{|X|}(n/k)} \in (z,\infty] \right\} \right| \mathbf{1}_{B_i} > \varepsilon/3 \right) \\[0.1cm]
&& + \Prob\left( \frac{1}{k} \sum_{i=1}^n \mathbf{1}_{C_i} > \varepsilon/3 \right)\\[0.1cm]
&\leq& \Prob\left( \frac{1}{k} \sum_{i=1}^n \left[ \mathbf{1} \left\{ \frac{|X_i^{(m)}|}{ b_{|X|}(n/k)} \in (z,\infty] \right\} - \mathbf{1} \left\{ \frac{|X_i|}{ b_{|X|}(n/k)} \in (z,\infty] \right\} \right] \mathbf{1}_{A_i} > \varepsilon/3 \right) \\[0.1cm]
&& + \Prob\left( \frac{1}{k} \sum_{i=1}^n \left[ \mathbf{1} \left\{ \frac{|X_i|}{ b_{|X|}(n/k)} \in (z,\infty] \right\} - \mathbf{1} \left\{ \frac{|X_i^{(m)}|}{ b_{|X|}(n/k)} \in (z,\infty] \right\} \right] \mathbf{1}_{B_i} > \varepsilon/3 \right) \\[0.1cm]
&& + \frac{3}{\varepsilon} \frac{n}{k} \E \left(  \mathbf{1}\left\{ \left| \frac{|X_1^{(m)}|}{ b_{|X|}(n/k)} -  \frac{|X_1|}{ b_{|X|}(n/k)} \right| > \delta \right\} \right) \\[0.25cm]
&:=& \text{I}_1 + \text{I}_2 + \text{I}_3.
\end{eqnarray*}
Here Chebyshev's inequality was used in the second last line. Now
\begin{equation*}
\text{I}_1 \leq \Prob\left( \frac{1}{k} \sum_{i=1}^n \left[ \mathbf{1} \left\{\frac{|X_i^{(m)}|}{ b_{|X|}(n/k)} \in (z,\infty] \right\} - \mathbf{1}  \left\{\frac{|X_i^{(m)}|}{ b_{|X|}(n/k)} \in (z + \delta,\infty] \right\} \right] > \varepsilon/3 \right) \\[0.1cm]
\end{equation*}
and since we know from (\ref{truncatedconv}) that
\begin{eqnarray*}
&& \frac{1}{k} \sum_{i=1}^n \mathbf{1} \left\{\frac{|X_i^{(m)}|}{ b_{|X|}(n/k)} \in (z,\infty] \right\} - \frac{1}{k} \sum_{i=1}^n \mathbf{1}  \left\{\frac{|X_i^{(m)}|}{ b_{|X|}(n/k)} \in (z + \delta,\infty] \right\} \\[0.1cm]
&&\overset{P}{\to} \frac{\sum_{j=0}^m |c_j|^{1/\gamma}}{\sum_{j=0}^{\infty} |c_j|^{1/\gamma}}\left(z^{-1/\gamma} - (z+\delta)^{-1/\gamma} \right)
\end{eqnarray*}
we conclude $ \text{I}_1  \to 0$ as $n,k,n/k \to \infty$ letting $\delta := \delta(\varepsilon) \searrow 0.$
\newline Similarly
\begin{equation*}
\text{I}_2 \leq \Prob\left( \frac{1}{k} \sum_{i=1}^n \left[ \mathbf{1} \left\{\frac{|X_i^{(m)}|}{ b_{|X|}(n/k)} \in (z - \delta,\infty] \right\} - \mathbf{1}  \left\{\frac{|X_i^{(m)}|}{ b_{|X|}(n/k)} \in (z,\infty] \right\} \right] > \varepsilon/3 \right), \\[0.1cm]
\end{equation*}
and by another application of (\ref{truncatedconv}) we conclude $ \text{I}_2  \to 0$ as $n,k,n/k \to \infty$ letting again $\delta := \delta(\varepsilon) \searrow 0$.
Finally, for $\delta' > 0$ and $n, k, n/k$ large:
\begin{eqnarray*}
|\text{I}_3| &=& \frac{3}{\varepsilon} \frac{n}{k} \Prob \left( \left| |X_1^{(m)}| -  |X_1| \right|  > \delta \, b_{|X|}(n/k) \right) \leq \frac{3}{\varepsilon}  \frac{n}{k} \Prob \left( \left| \sum_{m+1}^{\infty} c_j Z_{1-j} \right|  > \delta \, b_{|X|}(n/k) \right) \\[0.1cm]
&\sim& \frac{3}{\varepsilon}  \frac{\Prob \left( \left| \sum_{m+1}^{\infty} c_j Z_{1-j} \right|  > \delta \, b_{|X|}(n/k) \right)}{\Prob \left( |Z_{1}|  >  b_{|Z|}(n/k) \right)} \leq \frac{3}{\varepsilon}  \frac{\Prob \left( \left| \sum_{m+1}^{\infty} c_j Z_{1-j} \right|  > \delta' \, b_{|Z|}(n/k) \right)}{\Prob \left( |Z_{1}|  >  b_{|Z|}(n/k) \right)} \\[0.1cm]
&\sim& \frac{3}{\varepsilon} \sum_{m+1}^{\infty} |c_j/\delta'|^{1/\gamma}.
\end{eqnarray*}
as $n,k,n/k \to \infty$, where we used the fact that $||a-b| - |a|| \leq |b|$ for $ a, b \in \mathbb{R}$ in the first line, $k/n \sim \Prob(|Z_1| > b_{|Z|}(n/k))$ (Theorem 3.6 of \cite{Resnick3}) and $b_{|X|}(n/k) \sim \text{const.} \, b_{|Z|}(n/k)$ (\cite{Datta}, Lemma 5.2, and \cite{Resnick1}, Proposition 0.8(vi)) in line 2, and again \cite{Datta}, Lemma 5.2, in line 3. Since the last sum converges to zero as $m \to \infty$, the proof of (\ref{measure}) is complete. \qed
\end{proof}
We next study the behavior of the tail empirical measure of $|X_i|-|X|_{n,n-k}$, $1 \leq i \leq n$.
\begin{lemma} Let the Basic Assumptions hold and let $\sigma(t)$ be defined as in (\ref{aissig}), then, as $n,k,n/k \to \infty$, we have for $z \geq 0$:
\begin{equation}
\frac{1}{k} \sum_{i=1}^n \mathbf{1} \left\{ |X_i|-|X|_{n,n-k} > \sigma(n/k) z \right\} \: \: \overset{P}{\to} \: \: (z\gamma + 1)^{-1/\gamma}, \label{2.1.2}
\end{equation}
\end{lemma}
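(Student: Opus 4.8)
The plan is to read off the left-hand side of (\ref{2.1.2}) as the tail empirical measure $\mu_X$ of the previous lemma, but evaluated at a \emph{random} threshold, and then to feed the convergence of that threshold into the convergence already established in (\ref{measure}).

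First I would divide the defining inequality $|X_i| - |X|_{n,n-k} > \sigma(n/k) z$ by $b_{|X|}(n/k)$, which rewrites the sum exactly as
\begin{equation*}
\frac{1}{k} \sum_{i=1}^n \mathbf{1} \left\{ |X_i|-|X|_{n,n-k} > \sigma(n/k) z \right\} = \mu_X(T_n, \infty],
\end{equation*}
where
\begin{equation*}
T_n := \frac{|X|_{n,n-k}}{b_{|X|}(n/k)} + \frac{\sigma(n/k)}{b_{|X|}(n/k)} \, z.
\end{equation*}
By (\ref{ratio}) evaluated at $s = 1$ the first summand tends in probability to $1^{-\gamma} = 1$, while (\ref{relation}) gives $\sigma(n/k)/b_{|X|}(n/k) \to \gamma$; hence $T_n \overset{P}{\to} 1 + \gamma z$.

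It then remains to transfer the pointwise convergence in (\ref{measure}) from a deterministic argument to the random argument $T_n$. For this I would use that $z \mapsto \mu_X(z,\infty]$ is non-increasing and that the limit $z \mapsto z^{-1/\gamma}$ is continuous at the point $1 + \gamma z > 0$. Fix $\delta > 0$ and choose $\varepsilon \in (0, 1 + \gamma z)$ so small that both values $(1+\gamma z \pm \varepsilon)^{-1/\gamma}$ lie within $\delta$ of $(1+\gamma z)^{-1/\gamma}$. On the event $\{ |T_n - (1+\gamma z)| < \varepsilon \}$ monotonicity yields the sandwich
\begin{equation*}
\mu_X(1+\gamma z + \varepsilon, \infty] \; \leq \; \mu_X(T_n, \infty] \; \leq \; \mu_X(1+\gamma z - \varepsilon, \infty].
\end{equation*}
The two outer terms are evaluated at \emph{fixed} arguments, so by (\ref{measure}) they converge in probability to $(1+\gamma z + \varepsilon)^{-1/\gamma}$ and $(1+\gamma z - \varepsilon)^{-1/\gamma}$ respectively, both of which lie within $\delta$ of $(1+\gamma z)^{-1/\gamma}$. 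Since moreover $\Prob(|T_n - (1+\gamma z)| \geq \varepsilon) \to 0$, combining these facts forces $\mu_X(T_n, \infty]$ into a $\delta$-band around $(1+\gamma z)^{-1/\gamma}$ with probability tending to one. As $\delta > 0$ was arbitrary, $\mu_X(T_n,\infty] \overset{P}{\to} (1+\gamma z)^{-1/\gamma} = (z\gamma + 1)^{-1/\gamma}$, which is exactly (\ref{2.1.2}).

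The individual steps are routine; the point that needs genuine care --- and which I expect to be the main obstacle --- is the last one, namely coupling the in-probability convergence of the random threshold $T_n$ with the in-probability convergence of the whole empirical tail measure $\mu_X$. The monotonicity-plus-continuity squeeze above is what makes this rigorous, and one must check that the sandwich bounds are taken at fixed arguments (so that (\ref{measure}) genuinely applies) and that these arguments $1+\gamma z \pm \varepsilon$ remain strictly positive, which holds for every $z \geq 0$ as soon as $\varepsilon$ is chosen small.
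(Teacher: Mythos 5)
Your proof is correct, and its skeleton is the same as the paper's: rewrite the left-hand side of (\ref{2.1.2}) as the tail empirical measure $\mu_X$ evaluated at the random threshold $\bigl(\sigma(n/k)z + |X|_{n,n-k}\bigr)/b_{|X|}(n/k)$, show via (\ref{relation}) and (\ref{ratio}) that this threshold tends in probability to $1+\gamma z$, and then transfer the limit. The only genuine difference is how the transfer step is handled. The paper first establishes the \emph{joint} convergence in probability of $\bigl(\mu_X(z,\infty],\, \sigma(n/k)/b_{|X|}(n/k) + |X|_{n,n-k}/(z\, b_{|X|}(n/k))\bigr)$ to $\bigl(\mu(z,\infty], \gamma + 1/z\bigr)$ via Billingsley's Theorem 4.4, and then applies the continuous mapping theorem to the evaluation operator $T(\mu_X(z,\infty],x) \mapsto \mu_X(zx,\infty]$, citing the proof of Proposition 2.1 in Resnick and St\^aric\^a for the continuity of $T$ at the limit point; it also disposes of $z=0$ separately as trivial. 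Your monotonicity-plus-continuity sandwich is precisely the elementary content behind that continuity claim, carried out by hand: on the event $\{|T_n - (1+\gamma z)| < \varepsilon\}$ you trap $\mu_X(T_n,\infty]$ between $\mu_X(1+\gamma z \pm \varepsilon,\infty]$, each evaluated at a \emph{fixed} argument so that (\ref{measure}) applies directly, and continuity of $u \mapsto u^{-1/\gamma}$ at $1+\gamma z \geq 1 > 0$ closes the squeeze. What your route buys is self-containedness: it uses only the pointwise-in-$z$ statement (\ref{measure}) at two fixed arguments, whereas the operator/CMT route implicitly leans on $\mu_X$ converging in a function- or measure-space sense (the framework of Resnick and St\^aric\^a), which the paper's Lemma on (\ref{measure}) states only pointwise. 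What the paper's route buys is brevity and a reusable device --- the same joint-convergence-plus-continuous-mapping pattern reappears in the proof of the subsequent lemma and of the main theorem. Your identification of the random-threshold coupling as the step needing genuine care is exactly right, and your treatment of it is sound, including the check that $1+\gamma z - \varepsilon$ stays strictly positive.
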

\begin{proof} If $z = 0$, the proof is trivial and hence we lay our focus on the case where $z > 0$. By (\ref{relation}) and (\ref{ratio}), we have that
\begin{equation*}
\frac{\sigma(n/k)}{ b_{|X|}(n/k)} +  \frac{|X|_{n,n-k}}{z \, b_{|X|}(n/k)} \:\: \overset{P}{\to} \: \: \gamma + \frac{1}{z},
\end{equation*}
$n,k,n/k \to \infty$, and together with (\ref{measure}), we get the following joint convergence in probability:
\begin{equation}
\left(\mu_X(z,\infty],\frac{\sigma(n/k)}{ b_{X}(n/k)} +  \frac{|X|_{n,n-k}}{z \, b_{|X|}(n/k)}\right) \: \: \overset{P}{\to} \: \:  \left(\mu(z,\infty], \gamma + \frac{1}{z} \right). \label{jointconv}
\end{equation}
(\cite{Billingsley}, Theorem 4.4).
\newline The final step then is to make use of the operator
\begin{equation*}
T: \mathbb{R}^+ \times \mathbb{R}^+  \to \mathbb{R}^+ \, \text{,} \: \:  T(\mu_X(z,\infty],x) \mapsto \mu_X(zx,\infty].
\end{equation*}
Since $T$ is continuous in $(\mu(z,\infty],x)$ (see \cite{Resnick2}, proof of Proposition 2.1), a simple application of the continuous mapping theorem on (\ref{jointconv}) yields
\begin{align*}
& T\left(\mu_X(z,\infty],\frac{\sigma(n/k)}{ b_{X}(n/k)} +  \frac{|X|_{n,n-k}}{z \, b_{|X|}(n/k)}\right) \\[0.1cm] &=
\frac{1}{k} \sum_{i=1}^n \mathbf{1} \left\{ \frac{|X_i|}{b_{|X|}(n/k)} > \frac{ \sigma(n/k) z + |X|_{n,n-k}}{b_{|X|}(n/k)} \right\} \\[0.1cm]
&= \frac{1}{k} \sum_{i=1}^n \mathbf{1} \left\{ |X_i|-|X|_{n,n-k} > \sigma(n/k) z \right\} \\[0.1cm]
&\overset{P}{\to} T\left(\mu(z,\infty], \gamma + \frac{1}{z} \right) = \mu(z\gamma + 1,\infty].
\end{align*}
\qed
\end{proof}
The next result gives the link between the tail empirical measure and a generalized form of the two tail array sums involved in the likelihood moment equations.
\begin{lemma} Assume that the Basic Assumptions hold and let $\sigma(t)$ be defined as in (\ref{aissig}). Then, as $n,k,n/k \to \infty$, we have for $r < 0$ and $x,y > 0$:
\begin{align}
& \frac{1}{k} \sum_{i=0}^{k-1} \log \left(1 + \frac{\gamma x}{\sigma(n/k)} \left(|X|_{n,n-i} - |X|_{n,n-k} \right)  \right) \nonumber \\[0.1cm] &  \overset{P}{\to} \int_0^{\infty} \frac{dz}{\left(\frac{z}{x} + 1 \right)^{1/\gamma} (z+1) }:= \psi_1(x), \label{2.1.3.1} \\[0.1cm]
& \frac{1}{k} \sum_{i=0}^{k-1} \left(1 + \frac{\gamma x}{\sigma(n/k)} \left(|X|_{n,n-i} - |X|_{n,n-k} \right)  \right)^{\hspace{-0.05cm} r/y}  \nonumber \\[0.1cm] &  \overset{P}{\to}  \frac{r}{y} \int_0^{\infty} \frac{dz}{\left(\frac{z}{x} + 1 \right)^{1/\gamma} (z+1)^{1-r/y}} + 1:= \psi_2(x,y). \label{2.1.3.2}
\end{align}
\end{lemma}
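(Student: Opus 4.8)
The plan is to recognise both tail-array sums as Lebesgue--Stieltjes integrals of a test function against the tail empirical measure of Lemma 3.2, and then transfer the pointwise convergence \eqref{2.1.2} to the level of the integrals. Introduce the (random) measure $\nu_n$ on $(0,\infty)$ through its survival function
\[
\nu_n(z,\infty] := \frac{1}{k}\sum_{i=1}^n \mathbf{1}\{|X_i| - |X|_{n,n-k} > \sigma(n/k)\,z\},
\]
so that, writing $W_{(i)} := (|X|_{n,n-i}-|X|_{n,n-k})/\sigma(n/k)$ for the scaled exceedances, the left-hand sides of \eqref{2.1.3.1} and \eqref{2.1.3.2} are exactly $\int_{(0,\infty)} f\,d\nu_n$ with $f(z)=\log(1+\gamma x z)$ and $f(z)=(1+\gamma x z)^{r/y}$, respectively. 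By Lemma 3.2, $\nu_n(z,\infty]\overset{P}{\to}(\gamma z+1)^{-1/\gamma}$ for every $z\ge 0$, and $\nu_n(0,\infty]=1$ because exactly $k$ observations exceed the $(k+1)$th largest. Integration by parts (Fubini) gives, for a $C^1$ test function with $f(z)\,\nu_n(z,\infty]\to 0$ as $z\to\infty$ (automatic for finite $n$, since $\nu_n$ has bounded support),
\[
\int_{(0,\infty)} f\,d\nu_n = f(0)\,\nu_n(0,\infty] + \int_0^\infty f'(z)\,\nu_n(z,\infty]\,dz ,
\]
so everything reduces to the convergence of $\int_0^\infty f'(z)\,\nu_n(z,\infty]\,dz$.

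The next step is to pass the limit $\nu_n(z,\infty]\to(\gamma z+1)^{-1/\gamma}$ inside this integral. Splitting $\int_0^\infty=\int_0^M+\int_M^\infty$, the part over $[0,M]$ is handled by the pointwise convergence of Lemma 3.2 together with bounded convergence: the integrand $f'$ is bounded on compacts and $\nu_n(z,\infty]\le 1$, whence $\int_0^M f'\,\nu_n\overset{P}{\to}\int_0^M f'\,(\gamma z+1)^{-1/\gamma}\,dz$ (monotonicity of $z\mapsto\nu_n(z,\infty]$ and continuity of the limit in fact upgrade this to local uniform convergence in probability). It then remains to show that the tail $\int_M^\infty f'(z)\,\nu_n(z,\infty]\,dz$ is uniformly negligible as $M\to\infty$.

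For the power sum \eqref{2.1.3.2} this tail is immediate: with $r<0$ and $y>0$ we have $r/y<0$, so $f(0)=1$ and $|f'(z)|=\tfrac{|r|}{y}\gamma x\,(1+\gamma x z)^{r/y-1}$ is \emph{integrable} on $(0,\infty)$; combined with $\nu_n(z,\infty]\le 1$ this furnishes a deterministic integrable dominating function and hence dominated convergence in probability outright. For the logarithmic sum \eqref{2.1.3.1} the obstacle is genuine and is the main hurdle of the proof: here $f(0)=0$ but $f'(z)=\gamma x/(1+\gamma x z)\sim 1/z$ is \emph{not} integrable against the crude bound $\nu_n\le 1$, so the upper tail must be controlled through the actual decay of $\nu_n(z,\infty]$. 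The way round is to pass through the measure $\mu_X$ of Lemma 3.1: on the high-probability event (guaranteed by \eqref{ratio} with $s=1$ and \eqref{relation}) where $|X|_{n,n-k}\ge(1-\varepsilon)b_{|X|}(n/k)$ and $\sigma(n/k)\ge(\gamma-\varepsilon)b_{|X|}(n/k)$, one bounds
\[
\nu_n(z,\infty]\ \le\ \mu_X\big((1-\varepsilon)+(\gamma-\varepsilon)z,\infty\big],
\]
whose expectation equals $\tfrac{n}{k}\,\Prob\big(|X|>b_{|X|}(n/k)\,u\big)$ at $u=(1-\varepsilon)+(\gamma-\varepsilon)z$. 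Since $\F_{|X|}$ is regularly varying with index $-1/\gamma$, this quantity tends to $u^{-1/\gamma}$ and, by a Potter bound, is at most $C\,u^{-1/\gamma+\eta}$ uniformly in large $n$ for $u\ge u_0$; because $u\asymp z$ for large $z$, the resulting bound $z^{-1-1/\gamma+\eta}$ on $f'(z)\,\E\,\nu_n(z,\infty]$ is integrable for small $\eta$. Markov's inequality then makes $\int_M^\infty$ uniformly small in probability as $M\to\infty$, while the complementary (low-probability) event contributes negligibly.

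Finally, with the limits in hand I would identify them with $\psi_1(x)$ and $\psi_2(x,y)$ by a routine change of variables. For the logarithmic case the substitution $z\mapsto\gamma z$ followed by $z\mapsto x u$ turns $\int_0^\infty \tfrac{\gamma x}{1+\gamma x z}\,(\gamma z+1)^{-1/\gamma}\,dz$ into $\int_0^\infty (z/x+1)^{-1/\gamma}(z+1)^{-1}\,dz=\psi_1(x)$; the same substitution applied to the boundary term $1$ plus $\tfrac{r}{y}\gamma x\int_0^\infty(1+\gamma x z)^{r/y-1}(\gamma z+1)^{-1/\gamma}\,dz$ yields $\psi_2(x,y)$, completing the proof.
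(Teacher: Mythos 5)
Your proposal is correct, and at its core it runs on the same engine as the paper's proof: both represent the tail array sums as integrals of the tail empirical measure from (\ref{2.1.2}) (your integration by parts is the paper's identity $(1+w)^{r/y}=1+\tfrac{r}{y}\int_0^w (z+1)^{r/y-1}\,dz$ in a different guise), both get convergence on compact $z$-ranges from Lemma 3.2 via continuous mapping/bounded convergence, and both control the upper tail by a Markov-plus-Potter estimate after replacing $|X|_{n,n-k}$ and $\sigma(n/k)$ by $(1-\varepsilon)\,b_{|X|}(n/k)$ and $(\gamma-\varepsilon)\,b_{|X|}(n/k)$ using (\ref{ratio}) and (\ref{relation}) (note only the first replacement is a probabilistic event; the second is a deterministic consequence of (\ref{relation})). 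Where you genuinely diverge is in the allocation of effort, and your allocation is arguably the sharper one. The paper works out the power sum (\ref{2.1.3.2}) in full and asserts that the log sum (\ref{2.1.3.1}) follows by ``the very same steps''; you instead dispatch the power case by elementary dominated convergence --- since $r/y<0$ the derivative $|f'(z)|=\tfrac{|r|}{y}\gamma x(1+\gamma xz)^{r/y-1}$ is integrable, $\nu_n(z,\infty]\le 1$ for $z>0$ because at most $k$ observations strictly exceed $|X|_{n,n-k}$, and convergence in probability of the bounded integrand gives $\E\int_0^\infty |f'|\,|\nu_n-\mu|\,dz\to 0$ by Tonelli and dominated convergence --- and you reserve the Potter machinery for the log case, where $f'(z)\sim 1/z$ is not integrable against the crude bound and the genuine decay of the empirical tail is indispensable. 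This correctly identifies that the paper's Chebyshev--Potter tail estimate is actually dispensable in the case for which it is written (there $\int_t^\infty (z+1)^{r/y-1}\,dz\to 0$ deterministically, uniformly in $n$) and is load-bearing precisely in the case the paper leaves to the reader, so your write-up makes the logical structure more transparent. One small caveat: your boundary bookkeeping $\nu_n(0,\infty]=1$ presumes no ties at $|X|_{n,n-k}$, whereas the paper's exact identity with $(|X_i|-|X|_{n,n-k})^+$ and the correction $(n-k)/k$ holds regardless; the discrepancy only enters through the term $f(0)\bigl(1-\nu_n(0,\infty]\bigr)$, vanishes for the log case where $f(0)=0$, and is harmless here, but it is worth stating. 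Your closing change of variables ($z\mapsto\gamma z$, then $z\mapsto xu$) correctly recovers $\psi_1(x)$ and $\psi_2(x,y)$.
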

\begin{proof} We are going to prove (\ref{2.1.3.2}). (\ref{2.1.3.1}) can be shown using the very same steps.
\newline By some straightforward steps and denoting $X^+ := \max(X,0)$, it is easy to see that
\begin{eqnarray*}
&& \frac{1}{k} \sum_{i=0}^{k-1} \left(1 + \frac{\gamma x}{\sigma(n/k)} \left(|X|_{n,n-i} - |X|_{n,n-k} \right)  \right)^{r/y} \\[0.1cm] &=& \frac{1}{k} \sum_{i=1}^{n} \left(1 + \frac{\gamma x}{\sigma(n/k)} \left(|X_i| - |X|_{n,n-k} \right)^+  \right)^{r/y} - \frac{n-k}{k}\\[0.1cm] &=& \frac{1}{k} \sum_{i=1}^{n} \frac{r}{y} \int_0^{\gamma x \left(|X_i| - |X|_{n,n-k} \right)^+ / \sigma(n/k) }\frac{dz}{(z+1)^{1 - r/y}} + 1 \\[0.1cm] &=& \frac{r}{y} \underbrace{\int_0^\infty \frac{1}{k} \sum_{i=1}^n \mathbf{1} \left\{ |X_i|-|X|_{n,n-k} > \frac{\sigma(n/k) z}{ \gamma x} \right\} \frac{dz}{(z+1)^{1 - r/y}}}_{:= \displaystyle \chi_{n,x,y}} + 1.
\end{eqnarray*}
Next define $\chi_{n,x,y}(t) := \int_0^{t}  \frac{1}{k} \sum_{i=1}^n \mathbf{1} \left\{ \frac{\gamma x }{\sigma(n/k)}(|X_i|-|X|_{n,n-k}) > z \right\} \, \frac{dz}{(z+1)^{1 - r/y}}$ for $t < \infty$. Notice that the mapping $f(\cdot) \mapsto \int_0^{t} f(s) /(s+1)^{1 - r/y} \, ds$ is continuous (we are integrating over a finite region) and it follows from (\ref{2.1.2}) and the continuous mapping theorem that for all $t$:
\begin{equation*}
\chi_{n,x,y}(t)  \: \: \overset{P}{\to} \: \:  \int_0^{t} \frac{dz}{\left(\frac{z}{x} + 1 \right)^{1/\gamma} (z+1)^{1-r/y}}.
\end{equation*}
The result thus follows again if we are able to show, that for any $\varepsilon > 0$:
\begin{equation*}
\lim_{t \to \infty} \: \limsup_{n,k,n/k \to \infty} \Prob(|\chi_{n,x,y} - \chi_{n,x,y}(t)| > \varepsilon) = 0.
\end{equation*}
Pick $\delta > 0, \, \varepsilon' < 1$ and $ 0 < \varepsilon'' < 1/\gamma - r/y$. Then there exists a pair $(n_0, k_0) = (n_0(\delta, \varepsilon', \varepsilon''),  k_0(\delta, \varepsilon',\varepsilon''))$ such that for all $n > n_0$ and $k > k_0$:
\begin{eqnarray*}
&& \Prob(|\chi_{n,x,y} - \chi_{n,x,y}(t)| > \varepsilon)
\\[0.1cm] &=& \Prob \left( \int_t^\infty \mu_X \left( \frac{z \sigma(n/k)}{\gamma x b_{|X|}(n/k)} +  \frac{|X|_{n,n-k}}{ b_{|X|}(n/k)} , \infty \right] \frac{dz}{(z + 1)^{1 - r/y}} > \varepsilon \right)
\\[0.1cm]  &\overset{(\ref{relation})}{\leq}& \Prob \left( \int_t^\infty \mu_X \left( z ( 1- \varepsilon')/x + (1-\delta), \infty \right] \frac{dz}{(z + 1)^{1 - r/y}} > \varepsilon \right)
\\[0.1cm] && + \Prob \left(  \frac{|X|_{n,n-k}}{ b_{|X|}(n/k)} < 1 - \delta \right)
\\[0.1cm] &\overset{(\ref{ratio})}{\leq}& \Prob \left( \int_t^\infty \mu_X \left( (1 - \delta - \varepsilon')(z/x+1) , \infty \right] \frac{dz}{(z + 1)^{1 - r/y}} > \varepsilon \right) + o(1)
\\[0.1cm] &\leq&  \varepsilon^{-1} \int_t^\infty \frac{n}{k} \Prob \left(\frac{|X_1|}{ b_{|X|}(n/k)} > \frac{(1 - \delta - \varepsilon')}{x} \, (z+x) \right) \frac{dz}{(z + 1)^{1 - r/y}}  + o(1)
\\[0.25cm] &\leq& \text{const.} \int_t^\infty \frac{dz}{(z + x)^{1/\gamma - \varepsilon''}\cdot (z+1)^{1-r/y}}  + o(1)
\\[0.25cm] &\sim& \text{const.} \cdot t^{\, r/y - 1/\gamma + \varepsilon''} + o(1),
\end{eqnarray*}
where we used Chebyshev's inequality in line 6 and Potter's inequality (\cite{Bingham}) in line 7. Hence for $n, k, n/k$ large:
\begin{align*}
& \lim_{t \to \infty} \: \limsup_{n,k,n/k \to \infty} \Prob(|\chi_{n,x,y} - \chi_{n,x,y}(t)| > \varepsilon) \\[0.1cm] \leq& \lim_{t \to \infty} \: \limsup_{n,k,n/k \to \infty}  \text{const.} \cdot t^{\, r/y - 1/\gamma + \varepsilon''} + o(1) = 0.
\end{align*}\qed
\end{proof}
Notice that for the special cases where $x=1$ and $y = \gamma$, the integrals in (\ref{2.1.3.1}) and (\ref{2.1.3.2}) are easy to calculate so that $\psi_1(1) = \gamma$ and $\psi_2(1,\gamma) = \psi_2(1,\psi_1(1)) = 1/(1-r).$ This will be essential in our main theorem which follows next.
\begin{theorem} Assume that the Basic Assumptions hold and let $\sigma(t)$ be defined as in (\ref{aissig}). Then for $r < 0$, we have as $n,k,n/k \to \infty$:
\begin{equation}
  \left( \begin{array}{cc}
   \hat{\gamma}_{LME} \\[0.1cm]
   \hat{\sigma}_{LME} / \sigma(n/k)
  \end{array} \right)
   \: \: \overset{P}{\to} \: \:
     \left( \begin{array}{cc}
  \gamma \\[0.1cm]
   1
  \end{array} \right). \label{2.20}
\end{equation}
\end{theorem}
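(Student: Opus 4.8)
The plan is to recast the defining equations of the estimator on the scale of $\sigma(n/k)$, so that the two tail-array sums become exactly the quantities controlled by Lemma~3.3. Writing $\hat\theta_{LME}=\hat\gamma_{LME}/\hat\sigma_{LME}$ for the solution of (\ref{LME3}), I introduce the rescaled root $\hat x:=\hat\theta_{LME}\,\sigma(n/k)/\gamma$; substituting $\theta^*=\gamma x/\sigma(n/k)$ turns the left-hand side of (\ref{LME1}) into $A_n(x):=\frac{1}{k}\sum_{i=0}^{k-1}\log(1+\frac{\gamma x}{\sigma(n/k)}(|X|_{n,n-i}-|X|_{n,n-k}))$ and the left-hand side of (\ref{LME2}) into $B_n(x,y):=\frac{1}{k}\sum_{i=0}^{k-1}(1+\frac{\gamma x}{\sigma(n/k)}(|X|_{n,n-i}-|X|_{n,n-k}))^{r/y}$, i.e.\ precisely the left-hand sides of (\ref{2.1.3.1}) and (\ref{2.1.3.2}). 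In these terms (\ref{LME1}) reads $\hat\gamma_{LME}=A_n(\hat x)$, and eliminating $\hat\gamma_{LME}$ as in (\ref{LME3}) shows that $\hat x$ is a root of $\Phi_n(x):=B_n(x,A_n(x))-\frac{1}{1-r}$. By \cite{Zhang}, Theorem~2.1, this root is unique on $(0,\infty)$, and each of $A_n,B_n$ is continuous in $x>0$ since $|X|_{n,n-i}\geq|X|_{n,n-k}$ for $i\leq k-1$ keeps all bases at least $1$.

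Next I would identify the pointwise limit of $\Phi_n$. Lemma~3.3 gives $A_n(x)\overset{P}{\to}\psi_1(x)$ and $B_n(x,y)\overset{P}{\to}\psi_2(x,y)$ for each fixed $x,y>0$. To handle the random second argument $y=A_n(x)$ I would exploit that, for $r<0$, $B_n(x,\cdot)$ is monotone increasing in $y$ (a larger $y$ makes the negative exponent $r/y$ less negative, and the bases are $\geq 1$), while $\psi_2(x,\cdot)$ is continuous; a monotone sandwich then upgrades the two convergences to $B_n(x,A_n(x))\overset{P}{\to}\psi_2(x,\psi_1(x))$, hence $\Phi_n(x)\overset{P}{\to}\Phi(x):=\psi_2(x,\psi_1(x))-\frac{1}{1-r}$ for every fixed $x>0$. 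The remark preceding the theorem, namely $\psi_1(1)=\gamma$ and $\psi_2(1,\psi_1(1))=\frac{1}{1-r}$, shows that $x=1$ is a zero of $\Phi$.

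The decisive step is to show that $\Phi$ changes sign strictly at $x=1$, i.e.\ that there is an $\varepsilon_0>0$ with $\Phi(1-\varepsilon)$ and $\Phi(1+\varepsilon)$ nonzero and of opposite sign for $0<\varepsilon<\varepsilon_0$; this is where I expect the real work, as it requires differentiating the $\psi$-integrals under the integral sign and checking $\Phi'(1)\neq 0$ (equivalently, strict local monotonicity of $x\mapsto\psi_2(x,\psi_1(x))$, using that $\psi_1$ is itself strictly increasing). Granting this, a sandwich argument finishes the consistency of $\hat x$: for small $\varepsilon$ the fixed-point convergence of the previous paragraph gives $\Phi_n(1-\varepsilon)$ and $\Phi_n(1+\varepsilon)$ the same (opposite) signs as their limits with probability tending to one, so by the intermediate value theorem $\Phi_n$ has a root in $(1-\varepsilon,1+\varepsilon)$; by the uniqueness from \cite{Zhang}, Theorem~2.1, this root must be $\hat x$, whence $\hat x\overset{P}{\to}1$.

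Finally I would read off the two assertions. Since $A_n(\cdot)$ is monotone increasing in $x$ with continuous limit $\psi_1$, the pointwise convergence strengthens to locally uniform convergence in probability on a neighbourhood of $1$, so $\hat x\overset{P}{\to}1$ yields $\hat\gamma_{LME}=A_n(\hat x)\overset{P}{\to}\psi_1(1)=\gamma$. For the scale, the definition of $\hat x$ gives $\hat\sigma_{LME}/\sigma(n/k)=\hat\gamma_{LME}/(\gamma\,\hat x)$, and the right-hand side converges in probability to $\gamma/(\gamma\cdot 1)=1$. In summary, the main obstacle is purely analytic, the strict sign change of the deterministic limit $\Phi$ at $x=1$, whereas all the probabilistic content is supplied by Lemma~3.3 together with the monotonicity of the two tail-array sums.
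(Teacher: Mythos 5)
Your overall architecture matches the paper's proof almost exactly: rescale via $\hat x=\hat\theta_{LME}/\theta(n/k)$ with $\theta(n/k)=\gamma/\sigma(n/k)$, feed Lemma 3.3 into the one-dimensional equation (\ref{LME3}), use Zhang's uniqueness, locate the root near $x=1$, and read off $\hat\gamma_{LME}$ and $\hat\sigma_{LME}/\sigma(n/k)$ by the same algebra. Your function $\Phi(x)=\psi_2(x,\psi_1(x))-\tfrac{1}{1-r}$ is precisely the paper's $\psi(x)$ from (\ref{psi}). But there is a genuine gap at exactly the point you flag yourself: you never establish the strict sign change of $\Phi$ at $x=1$, writing only ``Granting this''. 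That claim is the crux of the whole theorem --- without it, $x=1$ could be a zero that $\Phi$ merely touches, and your intermediate-value localization collapses. The paper does not leave this to a computation with the limit integrals; it devotes a separate lemma (the lemma following the theorem) to proving that $\psi(x)$ is \emph{strictly decreasing} on $(0,\infty)$. The mechanism there is probabilistic rather than analytic: from the proof of Zhang's Theorem 2.1 one has the finite-sample identity expressing $\psi'(x,\mathbf{Y})$ as $\bar Z_x^{-2}k^{-2}\sum_{i<j}Z_{i,x}Z_{j,x}(u(Z_{i,x})-u(Z_{j,x}))(v(Z_{i,x})-v(Z_{j,x}))<0$, and the paper bounds this above by a product of three factors built from the intermediate order statistics $Z_{\lceil kl/5\rceil,x}$, $l=1,\dots,4$; Lemma 3.1(b) gives each factor a nonzero limit in probability, so $\psi'(x,\mathbf{Y})$ stays below a strictly negative constant in the limit, which forces strict monotonicity of $\psi$. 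Your alternative plan --- differentiate the $\psi$-integrals under the integral sign and verify $\Phi'(1)\neq 0$ --- is plausible in principle but nontrivial (the derivative of $x\mapsto\psi_2(x,\psi_1(x))$ couples the two integrals through $\psi_1'(x)$), and as submitted it is an announcement, not a proof.

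Two smaller points, one in your favor. First, your monotone-sandwich treatment of the random second argument in $B_n(x,A_n(x))$ (using that $B_n(x,\cdot)$ is nondecreasing in $y$ for $r<0$ and $\psi_2(x,\cdot)$ is continuous) is a legitimate substitute for the paper's composition-operator argument via the continuous mapping theorem, and arguably more careful. Second, your IVT-plus-uniqueness trap is structurally cleaner than the paper's proof, which first \emph{assumes} $\hat\theta_{LME}/\theta(n/k)\overset{P}{\to}a$ and then rules out $a\neq 1$, handling the boundary cases $0$ and $\infty$ by separate monotonicity arguments; your version pins the unique root inside $(1-\varepsilon,1+\varepsilon)$ with probability tending to one without presupposing any convergence. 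But both of these refinements sit downstream of the unproven sign-change claim, so the attempt remains incomplete until you supply the paper's Lemma 3.4 or an equivalent analytic verification.
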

\begin{proof}
We are going to show that $\hat{\theta}_{LME}/\theta(n/k) \overset{P}{\to} 1$ as $n,k,n/k \to \infty$, where $\hat{\theta}_{LME}$ is the (unique) solution of equation (\ref{LME3}) and $\theta(n/k) := \gamma/\sigma(n/k)$. If this convergence holds, then $\hat{\gamma} = \frac{1}{k} \sum_{i=0}^{k-1} \log (1 +\hat{\theta}_{LME} \left(|X|_{n,n-i} - |X|_{n,n-k} \right) ) \overset{P}{\to} \psi_1(1) = \gamma$ by (\ref{2.1.3.1}) and the continuous mapping theorem and hence we have that $\hat{\sigma}_{LME}/\sigma(n/k) = \hat{\gamma}_{LME}/\gamma \cdot \theta(n/k) / \hat{\theta}_{LME} \overset{P}{\to} 1 $ as $n,k,n/k \to \infty$. The joint convergence in (\ref{2.20}) finally follows by \cite{Billingsley}, Theorem 4.4.
\vspace{0.25cm}
\newline Define $Y_i := |X|_{n,n-i} - |X|_{n,n-k}$, $\boldsymbol{Y}:= \{Y_0, Y_1, \ldots, Y_{k-1} \}$ and for $x,y > 0$: $\psi_1(x,\boldsymbol{Y}) := (1/k) \sum^{k-1}_{i=0} \log(1+x  \theta(n/k)  Y_i)$ and $\psi_2(x,y,\boldsymbol{Y}) := (1/k) \sum^{k-1}_{i=0} (1+x  \theta(n/k)  Y_i)^{r/y}$.
\vspace{0.25cm}
\newline Now, from Result 3.3 and again Theorem 4.4 of \cite{Billingsley}, it easily follows as $n,k,n/k$ $\to \infty$ that
\begin{equation*}
(\psi_1(x,\boldsymbol{Y}), \psi_2(x,y, \boldsymbol{Y})) \overset{P}{\to} (\psi_1(x), \psi_2(x,y)).
\end{equation*}
Next consider the operator
\begin{equation*}
\tilde{T}: \mathbb{R}^+ \times \mathbb{R}^+  \to \mathbb{R}^+ \, \text{,} \: \:  \tilde{T}(\psi_1(x,\boldsymbol{Y}),\psi_2(x,y, \boldsymbol{Y})) \mapsto \psi_2(x,\psi_1(x,\boldsymbol{Y}), \boldsymbol{Y}).
\end{equation*}
Since both $\psi_1(x,\boldsymbol{Y})$ and $\psi_2(x,y, \boldsymbol{Y})$ are continuous in $x,y >0$, $\tilde{T}$ is also continuous and hence by the continuous mapping theorem:
\begin{equation*}
 \psi_2(x,\psi_1(x,\boldsymbol{Y}), \boldsymbol{Y}) \overset{P}{\to} \psi_2(x,\psi_1(x)),
\end{equation*}
which clearly yields
\begin{equation}
\psi(x,\boldsymbol{Y}) := \psi_2(x,\psi_1(x,\boldsymbol{Y}), \boldsymbol{Y}) - \frac{1}{(1-r)}\overset{P}{\to} \psi_2(x,\psi_1(x)) - \frac{1}{(1-r)} := \psi(x) \label{psi}
\end{equation}
as $n,k,n/k \to \infty$. Notice thereby that $\psi(x,\boldsymbol{Y})$ is a strictly decreasing function in $x \in (0,\infty)$ (cf. \cite{Zhang}, proof of Theorem 2.1) and according to Lemma 4, its limit function $\psi(x)$ has the very same property.
\vspace{0.25cm}
\newline Recall from (\ref{relation}) and (\ref{ratio}) that $\theta(n/k) Y_0 = \gamma Y_0/\sigma(n/k) \sim  Y_0/ b_{|X|}(n/k) \overset{P}{\to} \infty$ as $n,k,n/k \to \infty$. Hence there exists no solution $\hat{\theta}_{LME}$ of (\ref{LME3}) such that  $\hat{\theta}_{LME}/\theta(n/k)$ converges to a negative value in probability.
\vspace{0.25cm}
\newline Let's consequently assume first that $\hat{\theta}_{LME}/\theta(n/k) \overset{P}{\to} a \in (0,\infty)$. Obviously, the mapping $\hat{\theta}_{LME}/\theta(n/k) $ $ \mapsto \psi(\hat{\theta}_{LME}/\theta(n/k),\boldsymbol{Y})$ is continuous since $\psi(x,\boldsymbol{Y})$ is continuous in $x \in (0,\infty)$ and from (\ref{psi}), it follows once more by \cite{Billingsley}, Theorem 4.4, and the continuous mapping theorem that
\begin{equation}
 \psi(\hat{\theta}_{LME}/\theta(n/k),\boldsymbol{Y}) - \psi(a) \overset{P}{\to} 0, \label{psylim}
\end{equation}
$n,k,n/k \to \infty$. But by definition of $\hat{\theta}_{LME}$ (see (\ref{LME3})), $\psi(\hat{\theta}_{LME}/\theta(n/k),\boldsymbol{Y}) = 0$ and thus the convergence in (\ref{psylim}) holds if and only if $a = 1$ because only in that case $\psi(a) = 0$ by the monotonicity of $\psi$.
\vspace{0.25cm}
\newline Thus, the proof is complete if we are able to show that neither $\hat{\theta}_{LME}/\theta(n/k) \overset{P}{\to} 0$ nor $\hat{\theta}_{LME}/\theta(n/k) \overset{P}{\to} \infty$.
\newline For the first case, use again the monotonicity of $\psi(x ,\boldsymbol{Y})$ to see that for any $\varepsilon \in (0,1)$:
\begin{eqnarray*}
&& \lim_{n,k,n/k \to \infty} \Prob\left(  \left| \hat{\theta}_{LME}/\theta(n/k)  \right| \geq  \varepsilon \right) = \lim_{n,k,n/k \to \infty} \Prob\left( \hat{\theta}_{LME}/\theta(n/k)   \geq \varepsilon \right)  \\[0.1cm]
&=& \lim_{n,k,n/k \to \infty} \Prob\left( \psi(\hat{\theta}_{LME}/\theta(n/k),\boldsymbol{Y})  \leq \psi(\varepsilon
,\boldsymbol{Y})\right) = \lim_{n,k,n/k \to \infty} \Prob\left( 0  \leq \psi(\varepsilon ,\boldsymbol{Y})\right)  \\[0.25cm]
&=& 1,
\end{eqnarray*}
because $ \psi(\varepsilon ,\boldsymbol{Y}) \overset{P}{\to} \psi(\varepsilon) > 0$. Hence
\begin{equation*}
\Prob\left(  \left| \hat{\theta}_{LME}/\theta(n/k)  \right| <  \varepsilon \right) \to 0
\end{equation*} as $n,k,n/k \to \infty$.
\newline Similarly for the second case:
\begin{eqnarray*}
&& \lim_{n,k,n/k \to \infty} \Prob\left(  \left| \hat{\theta}_{LME}/\theta(n/k)  \right| >  \varepsilon^{-1} \right) = \lim_{n,k,n/k \to \infty} \Prob\left( 0  < \psi(\varepsilon^{-1} ,\boldsymbol{Y})\right) = 0,
\end{eqnarray*}
because $ \psi(\varepsilon^{-1} ,\boldsymbol{Y}) \overset{P}{\to} \psi(\varepsilon^{-1}) < 0$ by the monotonicity of $\psi(x ,\boldsymbol{Y})$. This concludes the proof. \qed
\end{proof}
\begin{lemma}
$\psi(x)$ defined in (\ref{psi}) is a strictly decreasing function in $x \in (0,\infty)$.
\end{lemma}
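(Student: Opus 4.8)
The additive constant $1/(1-r)$ in (\ref{psi}) does not affect monotonicity, so it suffices to prove that $q(x):=\psi_2(x,\psi_1(x))$ is strictly decreasing on $(0,\infty)$. Note that the tempting shortcut ``$q$ is a pointwise limit of the strictly decreasing random functions $\psi(\,\cdot\,,\boldsymbol{Y})$'' only delivers that $q$ is non-increasing, so a genuine computation is unavoidable. The plan is first to pass to a probabilistic representation of the two integrals. Let $W$ be a random variable with survival function $\Prob(W>w)=(1+\gamma w)^{-1/\gamma}$, $w\ge 0$ (the limiting generalized Pareto law from (\ref{2.1.2})). A change of variables identifies the integrals in (\ref{2.1.3.1}) and (\ref{2.1.3.2}) with expectations, namely $\psi_1(x)=\E[\log(1+\gamma x W)]$ and $\psi_2(x,y)=\E[(1+\gamma x W)^{r/y}]$. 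Writing $u:=1+\gamma x W$, $A:=\log u$ and $\rho(x):=r/\psi_1(x)$, this gives the compact form $q(x)=\E[e^{\rho(x)A}]$.

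Next I would differentiate under the expectation (legitimate by dominated convergence, the integrands being smooth in $x$ and locally dominated). With $p:=\psi_1$ and $B:=\partial_x u/u=\gamma W/(1+\gamma x W)$, and using $\rho'=-rp'/p^2=-\rho p'/p$, this yields
\[ q'(x)=\rho'(x)\,\E[e^{\rho A}A]+\rho(x)\,\E[e^{\rho A}B]=\frac{r}{p}\Bigl(\E[e^{\rho A}B]-\frac{p'}{p}\,\E[e^{\rho A}A]\Bigr). \]
Since $r<0$ and $p=\psi_1(x)>0$ for $x>0$, the prefactor $r/p$ is negative, so $q'(x)<0$ is equivalent to $p\,\E[e^{\rho A}B]>p'\,\E[e^{\rho A}A]$. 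Observing that $p=\E[A]$ and $p'=\E[B]$, the whole statement reduces to the single inequality
\[ \E[A]\,\E[e^{\rho A}B]-\E[B]\,\E[e^{\rho A}A]>0. \]

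The concluding step is a correlation inequality, which I would establish by the two-copies symmetrization. Taking $(A_1,B_1)$, $(A_2,B_2)$ to be independent copies of $(A,B)$, the left-hand side above equals $\tfrac12\,\E\bigl[(A_1B_2-A_2B_1)(e^{\rho A_2}-e^{\rho A_1})\bigr]$. It then remains to see that this integrand is a.s. nonnegative. Expressing $B$ through $A$ via $e^{-A}=1/(1+\gamma x W)$ gives $B=(1-e^{-A})/x$, whence $B/A=(1-e^{-A})/(xA)$ is strictly decreasing in $A$, while $e^{\rho A}$ is strictly decreasing in $A$ because $\rho<0$. Factoring $A_1B_2-A_2B_1=A_1A_2(B_2/A_2-B_1/A_1)$, the integrand becomes $A_1A_2\bigl(B_2/A_2-B_1/A_1\bigr)\bigl(e^{\rho A_2}-e^{\rho A_1}\bigr)$, a product of $A_1A_2>0$ with two differences of equally oriented (decreasing) functions of $A$, hence nonnegative, and strictly positive unless $A_1=A_2$. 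As $A=\log(1+\gamma x W)$ has a continuous distribution, $\{A_1=A_2\}$ is null, the expectation is strictly positive, and therefore $q'(x)<0$ for every $x>0$.

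The crux of the argument, and the main obstacle, is precisely the sign of $q'$: the derivative splits into a positive ($\rho'$) term and a negative ($\rho$) term, so no termwise estimate can succeed; the decisive move is to recombine them into the single correlation functional above and to exploit that $B/A$ and $e^{\rho A}$ are comonotone (both decreasing) functions of $A$. A secondary technical point is the justification of differentiation under the integral and the finiteness of the moments $\E[e^{\rho A}A]$ and $\E[e^{\rho A}B]$, both of which follow from $\rho<0$ together with the regular variation carried by $W$.
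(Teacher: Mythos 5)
Your proof is correct, but it takes a genuinely different route from the paper's. The paper never analyzes the limit function $\psi(x)$ directly: it works with the pre-limit random function $\psi(x,\boldsymbol{Y})$, borrows from Zhang's Theorem 2.1 the pairwise identity $\psi'(x,\boldsymbol{Y}) = \bar{Z}_x^{-2}k^{-2}\sum_{i<j} Z_{i,x}Z_{j,x}\,(u(Z_{i,x})-u(Z_{j,x}))(v(Z_{i,x})-v(Z_{j,x}))$ with $u$ increasing and $v$ decreasing, and then bounds $\psi'(x,\boldsymbol{Y})$ above by a block of terms indexed by the order statistics $Z_{\lceil kl/5\rceil,x}$, $l=1,\dots,4$, whose limits (\ref{zconv}) are distinct constants; this produces a strictly negative asymptotic upper bound on the empirical derivative, which is precisely the quantitative information needed to rule out the pitfall you correctly flag at the outset (pointwise limits of strictly decreasing functions are only non-increasing). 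You instead attack the limit itself: your identification $\psi_1(x)=\E[\log(1+\gamma x W)]$, $\psi_2(x,y)=\E[(1+\gamma x W)^{r/y}]$ with $W$ generalized Pareto checks out (and reproduces the paper's values $\psi_1(1)=\gamma$, $\psi_2(1,\gamma)=1/(1-r)$), the differentiation formula $q'=\tfrac{r}{p}\bigl(\E[e^{\rho A}B]-\tfrac{p'}{p}\E[e^{\rho A}A]\bigr)$ is right, and the two-copy symmetrization together with the comonotonicity of $(1-e^{-A})/A$ and $e^{\rho A}$ (both strictly decreasing, with $A$ continuously distributed so $\{A_1=A_2\}$ is null) gives strict positivity of the correlation functional and hence $\psi'(x)<0$ everywhere. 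It is worth noticing that your symmetrization is exactly the continuum analogue of Zhang's discrete pairwise decomposition — the same Chebyshev-type mechanism applied to the limiting GPD law rather than to the sample. What each approach buys: yours is deterministic and self-contained, yields the stronger conclusion $\psi'(x)<0$ for every $x>0$, and uses the stochastic apparatus only to identify the limit; the paper's recycles Zhang's finite-sample computation verbatim and avoids any fresh analysis of the limiting integrals, at the price of routing a purely analytic statement through convergence in probability of selected intermediate order statistics.
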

\begin{proof}
Let $\psi(x,\mathbf{Y})$ be defined as in (\ref{psi}). Then the aim is to show that $\psi'(x,\mathbf{Y}) := d\, \psi(x,\mathbf{Y}) / dx$ converges in probability to a negative limit for all $x > 0$ as $n,k,n/k \to \infty$.
\vspace{0.25cm}
\newline Recall that $Y_i = |X|_{n,n-i}-|X|_{n,n-k}$ and denote $Z_{i,x} := \log(1 + x\,\theta(n/k)Y_i)$ and $\bar{Z}_x := \frac{1}{k} \sum_{j=0}^{k-1} Z_{j,x}$ (which in fact is equivalent to $\psi_1(x,\mathbf{Y})$ in (\ref{2.1.3.1})) for $i = 0, 1, \ldots, k-1$ and $x > 0$. Notice thereby that $Z_{i,x} > 0$ for all $i$ and $x > 0$ and furthermore $Z_{i,x} > Z_{j,x}$ if $i < j$. Also, $\bar{Z}_x \overset{P}{\to} \psi_1(x)$ as $n,k,n/k \to \infty$ by (\ref{2.1.3.1}).
\newline Further recall by (\ref{relation}) and (\ref{ratio}) that for $l \in \{ 1, 2, 3, 4 \}$:
\begin{equation*}
\theta(n/k)Y_{\lceil k\cdot l / 5 \rceil } \sim  Y_{\lceil k\cdot l / 5 \rceil }/ b_{|X|}(n/k)  \overset{P}{\to} \left( \frac{l}{5}\right)^{-\gamma}  \hspace{-0.10cm} - 1 ,
\end{equation*}
and thus, by a simple application of the continuous mapping theorem,
\begin{equation}
Z_{\lceil k\cdot l / 5 \rceil , x}  \overset{P}{\to} \log \left( 1 + x  \left( \left( \frac{l}{5}\right)^{-\gamma}  \hspace{-0.10cm} - 1 \right) \right) \label{zconv}
\end{equation}
as $n,k,n/k \to \infty.$
\newline Now, going along the steps of the proof of Theorem 2.1 in \cite{Zhang}, $\psi'(x,\mathbf{Y})$ may be expressed in the following way:
\begin{equation*}
\psi'(x,\mathbf{Y}) = \bar{Z}_x^{-2} k^{-2} \sum_{0 \leq i < j \leq k-1} Z_{i,x}Z_{j,x} \cdot (u(Z_{i,x}) - u(Z_{j,x})) \cdot (v(Z_{i,x}) - v(Z_{j,x})),
\end{equation*}
where $u(a) := r \exp(ra/\bar{Z}_x)/x$ is strictly increasing and $v(a) := (1-\exp(a))/a$ is strictly decreasing in $a$. Hence $\psi'(x,\mathbf{Y}) < 0$ for all $x > 0$. If we define the set $A_{i,j}^* := \{\lceil k/5 \rceil \leq i \leq \lceil 2 k/5 \rceil, \lceil 3k/5 \rceil \leq j \leq \lceil 4 k/5 \rceil \}$, then, by straightforward steps:
\begin{eqnarray*}
&& \psi'(x,\mathbf{Y}) \\[0.25cm]
&<&  k^{-2} \sum_{A_{i,j}^*} Z_{i,x}Z_{j,x}/\bar{Z}_x^2 \cdot (u(Z_{i,x}) - u(Z_{j,x})) \cdot (v(Z_{i,x}) - v(Z_{j,x})) \\[0.1cm]
&\leq&  k^{-2} \sum_{A_{i,j}^*}\max_{A_{i,j}^*} \left\{ Z_{i,x}Z_{j,x}/\bar{Z}_x^2 \cdot (u(Z_{i,x}) - u(Z_{j,x})) \cdot (v(Z_{i,x}) - v(Z_{j,x})) \right\} \\[0.1cm]
&\sim&  25^{-1} \max_{A_{i,j}^*} \left\{ Z_{i,x}Z_{j,x}/\bar{Z}_x^2 \cdot (u(Z_{i,x}) - u(Z_{j,x})) \cdot (v(Z_{i,x}) - v(Z_{j,x})) \right\} \\[0.25cm]
&\leq&  25^{-1} \min_{A_{i,j}^*} \left\{ Z_{i,x}Z_{j,x}/\bar{Z}_x^2 \right\} \min_{A_{i,j}^*} \left\{ u(Z_{i,x}) - u(Z_{j,x}) \right\}  \max_{A_{i,j}^*} \left\{ v(Z_{i,x}) - v(Z_{j,x}) \right\} \\[0.25cm]
&=&  25^{-1}  \left\{ Z_{\lceil 2k/5 \rceil ,x}Z_{\lceil 4k/5 \rceil ,x}/\bar{Z}_x^2 \right\} \left\{ u(Z_{\lceil 2k/5 \rceil ,x}) - u(Z_{\lceil 3k/5 \rceil ,x})\right\}  \\[0.25cm] &&  \cdot  \left\{ v(Z_{\lceil 2k/5 \rceil ,x}) - v(Z_{\lceil 3k/5 \rceil ,x})\right\} := 25^{-1} (\text{I}_1 \cdot  \text{I}_2 \cdot  \text{I}_3 ).
\end{eqnarray*}
Using the fact that both $u(a), v(a)$ are continuous in $a > 0$, (\ref{2.1.3.1}), (\ref{zconv}) and several applications of the continuous mapping theorem simply yield $\text{I}_1 \overset{P}{\to} \text{const.} > 0$, $\text{I}_2 \overset{P}{\to} \text{const.} > 0$ and $\text{I}_3 \overset{P}{\to} \text{const.} < 0$ so that $\text{I}_1 \cdot \text{I}_2 \cdot \text{I}_3 \overset{P}{\to} \text{const.} < 0$ as $n,k,n/k \to \infty$. \qed
\end{proof}
\section{Concluding remarks and outlook}
At first glance it may seem a bit unconventional to use the classical results of Resnick \& St\^aric\^a rather than a well-known set of powerful tools established by Drees (\cite{Drees}) when studying asymptotic behavior of linear processes. To our misfortune, it is a very tough question whether Drees' main Theorem 2.1. can be applied to MA($\infty$)-processes as the author himself admits that one of its conditions ``is more complicated to check [...] for general linear time series'' (p. 635). Nevertheless, he was able to show that all conditions hold for an AR($1$)-process (section 3.2), a result which was recently extended to arbitrary AR($p$)-processes, $p \in \mathbb{N}\,  \diagdown \{0\}$, by Kulik et al. (\cite{Kulik}). It thus would be of great interest to extend these results once more to the whole class of ARMA processes.
\vspace{0.25cm}
\newline
Of course, the next step will be to show asymptotic normality for $\hat{\gamma}_{LME}$ and $\hat{\sigma}_{LME}$. A very promising way to do so is to use and to extend the theory of tail array sums established by \cite{Rootzen1}, \cite{Rootzen2}, which both served as basis for Resnick \& St\^aric\^a's studies on asymptotic normality of Hill's estimator for autoregressive data (\cite{Resnick4}). Also, a simulation of the finite sample behavior of the LMEs would be very useful since the condition $n,k,n/k \to \infty$ is only fulfilled if one really collects a very large amount of data -- a well-known problem when using intermediate order statistics like $|X|_{n,n-k}$.

\newpage

\end{document}